\documentclass[a4paper,reqno]{amsart} 

\usepackage{amssymb, mathtools, enumerate}

\textheight 22cm
\textwidth 17cm

\oddsidemargin 0.1cm
\evensidemargin 0.1cm

\newcommand \la{\lambda}
\newcommand \br{\mathbb{R}}

\newcommand \Span{\operatorname{Span}}
\newcommand \<{\langle}
\renewcommand \>{\rangle}
\newcommand \ip{\< \cdot, \cdot \>}
\newcommand \mS{\mathcal{S}}
\newcommand \cR{\mathcal{R}}
\newcommand \Rn{\mathbb{R}^n}
\newcommand \Cn{\mathbb{C}^n}
\newcommand \Tr{\operatorname{Tr}}
\newcommand \cX{\mathcal{X}}
\newcommand \cQ{\mathcal{Q}}

\theoremstyle{plane}

\newtheorem*{theorem*}{Theorem}
\newtheorem*{corollary*}{Corollary}
\newtheorem*{conj*}{Conjecture}
\newtheorem{lemma}{Lemma}
\newtheorem{proposition}{Proposition}
\newtheorem*{prop*}{Proposition}

\theoremstyle{definition}

\newtheorem*{definition*}{Definition}

\theoremstyle{remark}
\newtheorem*{remark*}{Remark}

\begin{document}

\title{$H$-contact unit tangent sphere bundles of Riemannian manifolds} 

\author{Yuri Nikolayevsky}
\address{Department of Mathematics and Statistics, La Trobe University, Melbourne, Victoria, 3086, Australia}
\email{y.nikolayevsky@latrobe.edu.au}
\thanks{Y. Nikolayevsky was supported by ARC Discovery grant DP130103485 and by Korea Institute for Advanced Study} 

\author{Jeong Hyeong Park}
\address{Department of Mathematics, Sungkyunkwan University, Suwon, 16419, Korea} 
\email{parkj@skku.edu}
\thanks{J.H. Park was supported by the Basic Science Research Program through the National Research Foundation of Korea(NRF) funded by the Ministry of Education (2014053413)}

\subjclass[2010]{Primary 53C25, 53D10; Secondary 53B20}
\keywords{tangent sphere bundle, $H$-contact manifold, $2$-stein}


\date{}

\begin{abstract}
A contact metric manifold is said to be \emph{$H$-contact}, if the characteristic vector field is harmonic. We prove that the unit tangent bundle of a Riemannian manifold $M$ equipped with the standard contact metric structure is $H$-contact if and only if $M$ is $2$-stein.
\end{abstract}

\maketitle

\section{Introduction}
\label{s:intro}

Let $(\tilde M, \tilde g)$ be a compact, orientable Riemannian manifold. The \emph{energy} $E(V)$ of a unit vector field $V$ is defined as the energy of the
corresponding map between $(\tilde M, \tilde g)$ and its tangent sphere bundle equipped with the Sasaki metric:
\begin{equation*}
E(V)=\frac12 \int_M |dV|^2 dv_{\tilde g} =\frac{m}{2} \mathrm{Vol}(\tilde M, \tilde g) +\frac12 \int_M |\nabla V|^2 dv_{\tilde g},
\end{equation*}
where $m = \dim M$ \cite{Wood}. Wood defined a unit vector field to be \emph{harmonic} if it is a critical point for the energy functional $E$ in the set of all unit vector fields of $\tilde M$, and then by considering the first variation obtained a local condition for harmonicity of a vector field \cite{Wood}.

A contact metric manifold whose characteristic vector field $\xi$ is harmonic is called an \emph{$H$-contact manifold}. Perrone proved that a contact metric manifold is $H$-contact if and only if the characteristic vector field $\xi$ is an eigenvector of the Ricci operator \cite{Pe1}.

A substantial progress has been achieved in the study of this construction in the case when the contact metric manifold is the unit tangent sphere bundle of a Riemannian manifold $(M,g)$ equipped with the Sasaki metric and the standard contact structure. Boeckx and Vanhecke \cite{BV2} showed that the unit tangent sphere bundle of a $2$-dimensional or a $3$-dimensional Riemannian manifold is $H$-contact if and only if the base manifold $(M,g)$ has constant sectional curvature. In \cite{CP}, Calvaruso and Perrone proved that the same is true under assumption that $(M,g)$ is conformally flat, and also obtained a local characterisation of such manifolds $(M,g)$ in the general case (see Proposition~\ref{p:cp} in Section~\ref{s:proof} below). That characterisation was used in \cite{CPS3} to show that a Riemannian manifold whose unit tangent sphere bundle is $H$-contact has constant scalar curvature, constant norm of the Ricci tensor and constant norm of the curvature tensor (the latter is true when $\dim M \ne 4$; there is a counterexample in dimension $4$). It was further established that a Riemannian manifold whose unit tangent sphere bundle is $H$-contact is $2$-stein, provided that either $(M,g)$ is Einstein \cite{CPS2} or $\dim M = 4$ \cite{CPS4}.

Our main result is as follows.
\begin{theorem*}
Let $(M,g)$ be a Riemannian manifold. The unit tangent sphere bundle $T_1M$ equipped with the standard contact metric structure is $H$-contact if and only if $(M,g)$ is $2$-stein. 
\end{theorem*}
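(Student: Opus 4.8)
The plan is to base the proof on Proposition~\ref{p:cp}. By Perrone's criterion \cite{Pe1}, $T_1M$ is $H$-contact precisely when the characteristic vector field $\xi$ is an eigenvector of the Ricci operator of $T_1M$; writing $\xi$ at a point $(x,u)\in T_1M$ as the horizontal lift of $u$ and inserting the known expression for the Ricci operator of the contact metric of $T_1M$ into this eigenvector condition yields a pointwise requirement on $(M,g)$. That requirement splits naturally into a \emph{horizontal} part, algebraic in the curvature tensor $R$, which after contracting with vectors orthogonal to $u$ and homogenising in $u$ says that
$$\operatorname{Ric}(u,u)+\kappa\,\operatorname{tr}(R_u^2)=\mathrm{const}$$
as a function of the unit vector $u\in T_xM$, for a fixed nonzero numerical constant $\kappa$, where $R_u=R(\,\cdot\,,u)u$ is the Jacobi operator; and a \emph{vertical} part, first order in $R$, which implies in particular that $(M,g)$ has harmonic curvature, i.e. that its Ricci tensor is a Codazzi tensor (so that, already, $(M,g)$ has constant scalar curvature, recovering part of \cite{CPS3}). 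The aim is to prove that, point by point, the horizontal and vertical requirements together are equivalent to the pair of $2$-stein relations $\operatorname{Ric}(u,u)=\mathrm{const}$ and $\operatorname{tr}(R_u^2)=\sum_i|R(u,e_i)u|^2=\mathrm{const}$ on the unit sphere of each $T_xM$.

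The "if" direction is short: a $2$-stein manifold is Einstein, so $\nabla\operatorname{Ric}=0$ and the vertical condition holds trivially, while the two $2$-stein relations make $\operatorname{Ric}(u,u)$ and $\operatorname{tr}(R_u^2)$ separately constant on each fibre sphere, so the horizontal condition holds as well; rewriting everything back via the first Bianchi identity and the pair symmetry of $R$ recovers the condition of Proposition~\ref{p:cp}, and $T_1M$ is $H$-contact.

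For the "only if" direction I would first extract the algebraic content. Expanding $u\mapsto\operatorname{tr}(R_u^2)$ in spherical harmonics on the fibre sphere, the horizontal relation forces the degree-four component of $\operatorname{tr}(R_u^2)$ to vanish identically and pins its degree-two component to a fixed multiple of the trace-free part of the Ricci tensor. Consequently $(M,g)$ is $2$-stein as soon as it is Einstein, and the theorem reduces to showing that an $H$-contact $T_1M$ forces $(M,g)$ to be Einstein. For this I would bring in the remaining information from \cite{CPS3} — constant $\|\operatorname{Ric}\|$ and, for $\dim M\neq 4$, constant $\|R\|$ — and combine it with the Codazzi property of $\operatorname{Ric}$ through a Bochner/Weitzenböck identity, using the just-established vanishing of the degree-four part of $\operatorname{tr}(R_u^2)$ to control the curvature term that appears; this should give $\nabla\operatorname{Ric}=0$, and then the structure of Codazzi Ricci tensors with constant eigenvalues, together with the partial $2$-stein data, should force a single Ricci eigenvalue, i.e. $(M,g)$ Einstein and hence $2$-stein. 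The case $\dim M=4$, where constancy of $\|R\|$ fails, is handled separately and is the situation of \cite{CPS4}; the case in which $(M,g)$ is Einstein from the start is \cite{CPS2}.

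The main obstacle is exactly this passage from $H$-contactness to the Einstein property. The horizontal relation controls only one linear combination of $\operatorname{Ric}(u,u)$ and $\operatorname{tr}(R_u^2)$, so the trace-free Ricci is not eliminated by algebra alone; one has to interweave it with the Codazzi condition and the global constancy statements of \cite{CPS3}, with delicate sign control in the Weitzenböck term into which the vanishing of the top spherical-harmonic component of $\operatorname{tr}(R_u^2)$ re-enters. Getting the spherical-harmonic bookkeeping and the curvature-contraction identities exactly right is where most of the effort lies, and dimension four genuinely has to be set aside because $\|R\|$ need not be constant there.
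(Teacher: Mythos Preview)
Your reformulation of Proposition~\ref{p:cp} is essentially correct: after the constant-curvature shift of Lemma~\ref{l:shift}, condition~(b) is equivalent to $\Tr(\cR_X^2)=H\|X\|^4$, which unwinds to $\Tr(R_u^2)-4\rho(u,u)=\mathrm{const}$ on unit vectors, and your spherical-harmonic reading correctly reduces the whole theorem to proving that $(M,g)$ is Einstein. The ``if'' direction is fine.

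The gap is your mechanism for obtaining Einstein. A Bochner/Weitzenb\"ock argument on the Codazzi tensor $\rho$ is a \emph{global} device: it needs integration over a compact manifold to extract $\nabla\rho=0$ from a divergence identity, but the theorem carries no compactness (or even completeness) hypothesis --- it is a pointwise statement about the curvature tensor. Even setting that aside, you give no indication of which Weitzenb\"ock formula you have in mind or why the vanishing of the top harmonic of $\Tr(R_u^2)$ would give the sign needed on the curvature term; this is the step where ``most of the effort lies'' by your own admission, and as stated it is a hope rather than an argument. The paper's route is entirely different and purely pointwise. At a point $p$ with $N\ge 2$ distinct Ricci eigenvalues, the Codazzi condition is used \emph{only} through the Derdzinski--Shen theorem \cite{DS}, yielding the algebraic block relation $\cR(W_1,W_1)W_2=\cR(W_2,W_2)W_1=0$ for the eigenspace decomposition $T_pM=W_1\oplus W_2$. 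The core of the proof is then Proposition~\ref{p:act}: any algebraic curvature tensor on $\Rn$, $n\ge 5$, satisfying both $\Tr(\cR_X^2)=H\|X\|^4$ and such a block decomposition has constant curvature. Its proof complexifies \eqref{eq:RXsq}, symmetrises over permutations of the coordinates within each block, and then (via Lemma~\ref{l:exists}) selects complex test vectors so that the resulting identity has zero right-hand side while the left-hand side is a \emph{positive semidefinite} quadratic form in the components of $\cR$; this forces enough components to vanish to contradict $N\ge 2$. That pointwise positivity trick --- not any global analysis --- is the missing idea.
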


Recall that an $n$-dimensional Riemannian manifold $(M,g)$ is said to be \emph{2-stein} if there exist two functions $f_1, f_2: M \to \br$ such that for every $p \in M$ and every vector $X$ tangent to $M$ at $p$ we have
\begin{equation*}
    \Tr R_X = f_1(p) \|X\|^2, \qquad \Tr (R_X^2) = f_2(p) \|X\|^4,
\end{equation*}
where $R_X$ is the Jacobi operator at $p$ \cite[p.~47]{CGW}. In particular, any 2-stein manifold is Einstein. By Schur's Theorem, the function $f_1$ is constant when $n \ge 3$, and by \cite[\S 6.57, 6.61]{Bes}, the function $f_2$ is constant when $n \ge 5$. $2$-stein manifolds show up in many questions in Riemannian geometry, to name one, in the theory of harmonic spaces \cite[\S 6.46]{Bes}. In dimension $n=4$, the structure of the curvature tensor of a $2$-stein manifold is well known \cite[Lemma~7]{SV}; any four-dimensional $2$-stein manifold is pointwise Osserman \cite[\S 6.73]{Bes}, hence self-dual (up to the choice of orientation) \cite{BG}, and as such, is either hyperk\"{a}hler or quaternionic K\"{a}hler depending on the scalar curvature. In dimension $n=5$ one can be much more specific (perhaps the true reason for that is the fact that $f_2$ is constant): any $2$-stein manifold either has constant curvature, or up to scaling, is locally isometric either to the symmetric space $\mathrm{SU}(3)/\mathrm{SO}(3)$ or to its non-compact dual $\mathrm{SL}(3)/\mathrm{SO}(3)$ \cite[Proposition~1]{N}. The classification of $2$-stein spaces is known in the locally symmetric case \cite{CGW} and for some other classes of manifolds.

\smallskip 

In Section~\ref{s:prelim} we give necessary background on contact geometry and the Sasaki metric. The proof of the Theorem is given in Section~\ref{s:proof}; the core of the proof is a purely algebraic Proposition~\ref{p:act}.

All the objects (manifolds, metrics, vector fields, forms) in this paper are assumed to be of class $C^{\infty}$.

%



\section{Standard contact metric structure on the unit tangent sphere bundle}\label{s:prelim}

We start with some preliminaries on a contact metric manifolds (the reader is referred  to \cite{Blair1} for more details). A $(2n-1)$-dimensional manifold $\bar M$ is said to be \emph{contact} if it admits a global $1$-form $\eta$ such that $\eta\wedge(d\eta)^{n-1}\neq 0$ everywhere on $\bar M$, where the exponent denotes the $(n-1)$-st exterior power. We call such an $\eta$ a \emph{contact form} on $\bar M$. Given a contact form $\eta$, there exists a unique vector field $\xi$, the \emph{characteristic vector field}, satisfying $\eta(\xi)=1$ and $d\eta(\xi, X)=0$, for any vector field $X$ on $\bar M$. A Riemannian metric $\bar g$ on $\bar M$ is said to be \emph{an associated metric} to a contact form $\eta$ if there exists a $(1,1)$-tensor field $\phi$ satisfying
\begin{equation} \label{eq:asso}
        \eta(X)=\bar g(X,\xi),\quad d\eta(X, Y)=\bar g(X,\phi Y), \quad \phi^2 X=- X+\eta(X)\xi,
\end{equation}
for any vector fields $X$ and $Y$ on $\bar M$. A Riemannian manifold $\bar M$ equipped with structure tensors $(\bar g,\phi,\xi,\eta)$ satisfying
 \eqref{eq:asso} is called a \emph{contact metric manifold}.

\smallskip

Let $(M,g)$ be a Riemannian manifold, with the Levi-Civita connection $\nabla$. The tangent bundle $TM$ of $(M,g)$ consists of pairs $(p,u)$, where $p \in M$ and $u$ a tangent vector to $M$ at $p$. The mapping $\pi: TM \rightarrow M, \; \pi(p,u)=p$, is the natural projection from $TM$ onto $M$.



For a vector field $X$ on $M$, its \emph{vertical lift} $X^v$ is the unique vector field on $TM$ defined by $X^v \omega =\omega(X) \circ \pi$, where $\omega$ is a $1$-form on $M$, and the \emph{horizontal lift} $X^h$  is the unique vector field on $TM$ is defined by $X^h \omega =\nabla_X\omega$. Any vector tangent to $TM$ at $(p,u)$ can be uniquely represented as $X^h+Y^v$ for some vectors $X$ and $Y$ tangent to $M$ at $p$. The tangent bundle $TM$ can be endowed in a natural way with a Riemannian metric $g_S$, the \emph{Sasaki metric} defined as follows:
\begin{equation*}
  g_S(X^h,Y^h) = g_S(X^v,Y^v) = g(X,Y) \circ \pi, \quad g_S(X^h,Y^v)=0,
\end{equation*}
for any vector fields $X$ and $Y$ on $M$ (for more details on the Sasaki metric we refer the reader to the survey papers \cite{BV1, BY}). The Sasaki metric $g_S$ is Hermitian for the almost complex structure $J$ defined by $JX^h=X^v$ and $JX^v=-X^h$.

The unit tangent sphere bundle is the hypersurface of $TM$ given by $g_p(u,u)=1$. The unit normal vector field $N = u^v$ to $T_1M$ is the vertical lift of $u$ to $(p,u) \in T_1 M$.

We now define the standard contact metric structure of the unit tangent sphere bundle $T_1M$ of a Riemannian manifold $(M,g)$. The metric $g'$ on $T_1M$ is
induced from the Sasaki metric $g_S$ on $TM$. Using the almost complex structure $J$ on $TM$, we define the unit vector field $\xi'$, the $1$-form $\eta'$ and the $(1,1)$ tensor field $\phi'$ on $T_1M$ by $\xi'=-JN$ and $\phi' = J - \eta' \otimes N$. Since $g'(\bar X,\phi' \bar Y) = 2d\eta'(\bar X,\bar Y)$,
the quadruple $(g',\phi',\xi',\eta')$ is not a contact metric structure. By rescaling
\begin{equation*}
\xi=2\xi',\qquad \eta=\frac{1}{2}\,\eta',\qquad \phi=\phi',\qquad \bar g=\frac{1}{4}\, g',
\end{equation*}
we get \emph{the standard contact metric structure} $(\bar g, \phi, \xi,\eta)$ on $T_1 M$.

\section{Proof of the Theorem}
\label{s:proof}

Let $(M,g)$ be an $n$-dimensional Riemannian manifold, with $\nabla, R$ and $\rho$ the Levi-Civita connection, the Riemann curvature tensor and the Ricci tensor respectively.

Suppose that the unit tangent sphere bundle $T_1M$ equipped with the standard contact metric structure is $H$-contact.

By \cite[Proposition~2]{BV2} and \cite{CPS4} we can assume that $n \ge 5$. By \cite[Proposition~3.1]{CP} we have the following.

\begin{proposition} \label{p:cp}
The unit tangent sphere bundle $T_1M$ of a Riemannian manifold $(M,g)$ is $H$-contact with respect to the standard contact metric structure $(\bar g, \phi, \xi, \eta )$ if and only if the following two conditions are satisfied.
\begin{enumerate}[{\rm (a)}]
  \item \label{it:cp1}
  The Ricci tensor is \emph{Codazzi}, that is, for arbitrary vector fields $X,Y$ and $Z$ on $M$ we have
  \begin{equation}\label{hc1}
    \nabla_X \rho(Y,Z)=\nabla_Y \rho(X,Z).
  \end{equation}

  \item \label{it:cp2}
  For unit, orthogonal vector fields $X, Y$ on $M$, we have
  \begin{equation}\label{hc2}
    \sum_{i=1}^n g(R(X,e_i)X, R(X,e_i)Y) = 2\rho(X,Y),
  \end{equation}
  where $\{e_i\}_{i=1}^n$ is a (local) orthonormal frame on $M$.
\end{enumerate}
\end{proposition}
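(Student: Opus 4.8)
The plan is to deduce Proposition~\ref{p:cp} from Perrone's criterion \cite{Pe1}: a contact metric manifold is $H$-contact if and only if its characteristic vector field is an eigenvector of the Ricci operator. The standard structure is obtained from the induced Sasaki metric $g'$ on $T_1M$ by the constant rescaling $\bar g=\tfrac14 g'$, which preserves the Levi-Civita connection and hence the set of Ricci eigenvectors; moreover $\xi=2\xi'=2u^h$ at the point $(p,u)$, since $\xi'=-JN=-J(u^v)=u^h$. Thus $T_1M$ is $H$-contact if and only if, at every $(p,u)$, the horizontal lift $u^h$ is an eigenvector of the Ricci operator of $(T_1M,g')$, that is, $\rho'(u^h,\bar Z)=0$ for every $\bar Z$ tangent to $T_1M$ and $g'$-orthogonal to $u^h$, where $\rho'$ is the Ricci tensor of $g'$. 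Fixing an orthonormal frame $\{e_i\}_{i=1}^n$ of $(M,g)$ at $p$ with $e_1=u$, the space $T_{(p,u)}T_1M$ is spanned by $\{e_i^h\}_{i=1}^n$ and $\{e_a^v\}_{a=2}^n$, and the test vectors orthogonal to $u^h$ split into the horizontal family $X^h$ with $X\perp u$ and the tangential vertical family $Y^v$ with $Y\perp u$.

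The next step is to express $\rho'(u^h,\cdot)$ through the geometry of the base. The hypersurface $T_1M\subset(TM,g_S)$ has unit normal $N=u^v$, and its shape operator satisfies $SX^h=0$ and $SY^v=-Y^v$ for $Y\perp u$, so that the fibre is an umbilic unit sphere. In particular $Su^h=0$, whence in the Gauss equation the second fundamental form makes no contribution to $\rho'(u^h,\bar W)$, and one obtains simply
\begin{equation*}
\rho'(u^h,\bar W)=\sum_{i=1}^n \tilde R(e_i^h,u^h,\bar W,e_i^h)+\sum_{a=2}^n \tilde R(e_a^v,u^h,\bar W,e_a^v),
\end{equation*}
where $\tilde R$ denotes the curvature of the Sasaki metric. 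I would then substitute Kowalski's classical formulas for $\tilde R$ at $(p,u)$: the purely horizontal components reproduce $R$ together with corrections quadratic in $R$ of the form $R(u,R(\cdot,\cdot)u)\cdot$, the mixed horizontal--vertical components are linear in $\nabla R$, and the quadratic mixed components contribute the terms $g(R(u,\cdot)\cdot,R(u,\cdot)\cdot)$.

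It remains to read off the two conditions. For $\bar W=Y^v$ the surviving contributions are exactly those linear in $\nabla R$; tracing them over the frame produces a contraction of $\nabla R$ which, after applying the second Bianchi identity in its contracted form, collapses to an expression in $\nabla\rho$. The vanishing of $\rho'(u^h,Y^v)$ for all unit $u$ and all $Y\perp u$ is then equivalent to the symmetry $\nabla_X\rho(Y,Z)=\nabla_Y\rho(X,Z)$, i.e.\ to the Codazzi condition~\eqref{hc1}. For $\bar W=X^h$ with $X\perp u$, the base Ricci curvature $\rho(u,X)$ arises from the horizontal trace while the quadratic term $\sum_i g(R(u,e_i)u,R(u,e_i)X)$ arises from the mixed vertical trace; collecting all quadratic-in-$R$ contributions and writing $X$ in place of $u$ recasts $\rho'(u^h,X^h)=0$ as~\eqref{hc2}, the factor $2$ being the net numerical constant produced by the Sasaki formulas together with the normalisation $\bar g=\tfrac14 g'$.

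The main obstacle I anticipate is bookkeeping: Kowalski's formulas carry several terms quadratic in $R$ and linear in $\nabla R$, and these must be tracked through the two traces with the correct constants, while checking that the component along $u^h$ itself (the eigenvalue direction) imposes no extra condition. The most delicate single point is the reduction of the traced $\nabla R$ expression to the Codazzi equation, where the first and second Bianchi identities must be combined correctly, and the verification of the precise constant $2$ in~\eqref{hc2}, which depends sensitively on the factors $\tfrac12$ in the Sasaki connection and on the rescaling of the fibre metric.
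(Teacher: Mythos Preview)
The paper does not prove this proposition at all: it is quoted verbatim from Calvaruso--Perrone \cite[Proposition~3.1]{CP} and used as input. So there is no ``paper's own proof'' to compare against; your outline is effectively a reconstruction of the argument in \cite{CP}, and as such it follows exactly the expected route (Perrone's eigenvector criterion, reduction from $(\bar g,\xi)$ to $(g',u^h)$ via the homothety, Gauss equation with $Su^h=0$, then Kowalski's curvature formulas traced separately over the horizontal and tangential-vertical directions).

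One small point is worth tightening. You correctly observe that the homothety $\bar g=\tfrac14 g'$ preserves the Levi-Civita connection and hence the Ricci eigenspaces, so the $H$-contact condition for $(\bar g,\xi)$ is \emph{identical} to the condition that $u^h$ be a Ricci eigenvector for $g'$. But then in your last paragraph you say the constant $2$ in \eqref{hc2} ``depends sensitively \dots on the normalisation $\bar g=\tfrac14 g'$''. It cannot: once you have reduced to $g'$, the rescaling has left the picture entirely, and the $2$ comes solely from the $\tfrac12$'s in the Sasaki connection/curvature formulas (and, when you use the contracted second Bianchi identity to pass from $\nabla R$ to $\nabla\rho$, from the combinatorics there). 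This is only a slip in the commentary, not in the argument, but since you flag the constant as the most delicate point it is worth being precise about its provenance.
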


By the result of \cite{CPS2}, it suffices to prove that $(M,g)$ is Einstein. Seeking a contradiction assume that it is not. Let $p \in M$ be a point having the maximal number, $N$, of pairwise non-equal Ricci eigenvalues. By our assumption $N \ge 2$, and by construction, in a neigbourhood of $p$, the Ricci tensor $\rho$ has $N$ smooth eigendistributions, of constant dimensions. Let $E_1, \dots, E_N$ be the eigenspaces of $\rho$ at $p$. By \cite[Theorem~1]{DS} the fact that $\rho$ is a Codazzi tensor implies that for $1 \le \la, \mu, \nu \le N$,
\begin{equation}\label{eq:codazzi}
R(E_\la, E_\mu) E_\nu=0, \; \text{ when }\nu \notin \{\la, \mu\}.
\end{equation}

We now consider condition~\eqref{it:cp2}. Given a Euclidean space $(\Rn, \ip)$ (with the inner product $\ip$ and the norm $\| \cdot\|$), we define an \emph{algebraic curvature tensor} to be a $(4,0)$ tensor having the same algebraic symmetries as the curvature tensor of a Riemannian manifold. Given an algebraic curvature tensor $\cR$, we can define the corresponding Ricci tensor, and for any $X \in \Rn$, the Jacobi operator $\cR_X$.

Take $(\Rn, \ip)$ to be the tangent space to $M$ at the point $p$ and define the algebraic curvature tensor $\cR$ on $(\Rn, \ip)$ by
\begin{equation}\label{eq:defcR}
\cR(X,Y,Z,W)=R(X,Y,Z,W)-2(\<X,Z\>\<Y,W\>-\<X,W\>\<Y,Z\>).
\end{equation}
Note that $\cR$ is obtained from $R$ by shifting by an algebraic curvature tensor of constant curvature, and so equation \eqref{eq:codazzi} with $R$ replaced by $\cR$ is still satisfied. What is more, we have the following fact.

\begin{lemma}\label{l:shift}
Let $R$ be an algebraic curvature tensor in $(\Rn, \ip)$. The following two conditions are equivalent.
\begin{enumerate}[{\rm (a)}]
  \item
  The algebraic curvature tensor $R$ satisfies \eqref{hc2}, for any unit, orthogonal vectors $X, Y \in \Rn$.

  \item
  There exists $H \ge 0$ such that the algebraic curvature tensor $\cR$ defined by \eqref{eq:defcR} satisfies the equation
  \begin{equation*}
    \Tr (\cR_X^2)= H \|X\|^4,
  \end{equation*}
  for all $X \in \Rn$.
\end{enumerate}
\end{lemma}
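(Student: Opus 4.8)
The plan is to reduce both (a) and (b) to a statement about the single quartic polynomial $P(X):=\Tr(R_X^2)$ on $\Rn$. Everything hinges on the identity
\begin{equation}\label{eq:plan-key}
\sum_{i=1}^n \<R(X,e_i)X,\,R(X,e_i)Y\> \;=\; \tfrac14\,\<\n P(X),\,Y\>, \qquad X,Y\in\Rn,
\end{equation}
which says that the left-hand side of \eqref{hc2} is one quarter of the directional derivative of $P$. To prove \eqref{eq:plan-key} I would write $P(X)=\sum_i\|R(X,e_i)X\|^2$ and differentiate in the direction $Y$: this produces the left-hand side of \eqref{hc2} together with the ``cross term'' $\sum_i\<R(X,e_i)X,\,R(Y,e_i)X\>$. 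Expanding the cross term in the orthonormal frame and applying the first Bianchi identity to the factor $R(Y,e_i,X,e_j)$, the resulting contributions containing $R(X,Y,e_i,e_j)$ — antisymmetric in $i,j$ — are annihilated by the factor $R(X,e_i,X,e_j)$, which is symmetric in $i,j$, while what remains is once more the left-hand side of \eqref{hc2}. This Bianchi cancellation is the step I expect to be the main obstacle: it is what converts the ``off-diagonal'' condition \eqref{hc2} into information about the ``diagonal'' quantity $\Tr(\cR_X^2)$.

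Granting \eqref{eq:plan-key}, put $q(X):=\rho(X,X)$, so that $\<\n q(X),Y\>=2\rho(X,Y)$. Condition (a), i.e. \eqref{hc2} for all unit orthogonal $X,Y$, then says precisely that at every unit vector $X$ the gradient $\n F(X)$ of $F:=\tfrac14 P-q$ is a multiple of $X$; since $S^{n-1}$ is connected, this is equivalent to $F$ being constant on $S^{n-1}$, and — as $P$ and $q$ are homogeneous of degrees $4$ and $2$ — to the existence of a constant $c$ with $\Tr(R_X^2)=4\|X\|^2\rho(X,X)+c\|X\|^4$ for all $X$. Independently, a direct computation writes $\cR_X=R_X+\kappa_X$, where $\kappa_X$ is a constant multiple of $\|X\|^2$ times the orthogonal projection onto $X^\perp$ (namely the Jacobi operator of the constant-curvature tensor occurring in \eqref{eq:defcR}); using $R_XX=0$ and $\Tr R_X=\rho(X,X)$, squaring and taking the trace yields
\begin{equation*}
\Tr(\cR_X^2)=\Tr(R_X^2)-4\|X\|^2\rho(X,X)+4(n-1)\|X\|^4 .
\end{equation*}

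Combining the last two displays, (a) holds if and only if $\Tr(\cR_X^2)=(c+4(n-1))\|X\|^4$ for some constant $c$; putting $H:=c+4(n-1)$ and noting that $\cR_X$, being a Jacobi operator of an algebraic curvature tensor, is symmetric, so $\Tr(\cR_X^2)\ge 0$ forces $H\ge 0$, this is exactly (b). The converse is the same chain read in reverse: $\Tr(\cR_X^2)=H\|X\|^4$ gives $\Tr(R_X^2)=4\|X\|^2\rho(X,X)+(H-4(n-1))\|X\|^4$, hence $F=\tfrac14 P-q$ is constant on $S^{n-1}$, hence $\n F(X)$ is a multiple of $X$ for each unit $X$, which by \eqref{eq:plan-key} is \eqref{hc2}. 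Minor points to watch in the full write-up: the homogeneity bookkeeping in the ``$\n F(X)\parallel X$ on $S^{n-1}$'' $\Leftrightarrow$ ``$F$ constant on $S^{n-1}$'' step, and fixing the sign and the constant in $\kappa_X$ from the curvature convention in force.
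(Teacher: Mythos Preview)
Your argument is correct and follows essentially the same route as the paper: both reduce (a) to the vanishing of the tangential derivative of a homogeneous degree-$0$ function built from $\Tr(\,\cdot_X^2)$, and then use constancy on $S^{n-1}$. The only cosmetic difference is the order of operations: the paper first passes to $\cR$ and then differentiates $\|X\|^{-4}\Tr(\cR_X^2)$, whereas you differentiate $\Tr(R_X^2)$ first and pass to $\cR$ afterwards via the explicit formula $\Tr(\cR_X^2)=\Tr(R_X^2)-4\|X\|^2\rho(X,X)+4(n-1)\|X\|^4$; in particular, the Bianchi cancellation you single out as ``the main obstacle'' is exactly what the paper uses (tacitly) when it asserts $(\partial_Y F)(X)=0$ from $\sum_i\<\cR(X,e_i)X,\cR(X,e_i)Y\>=0$.
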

\begin{proof}
Let $\{e_i\}_{i=1}^n$ be an orthonormal basis for $(\Rn,\ip)$ and let $X, Y \in \Rn$ be unit, orthonormal vectors. Then \eqref{hc2} is equivalent to
  \begin{align*}
    \sum_{i=1}^n & \<\cR(X,e_i)X, \cR(X,e_i)Y\> = \sum_{i=1}^n \<R(X,e_i)X-2(e_i-\<X,e_i\>X), R(X,e_i)Y+2\<Y,e_i\>X\> \\
    &= 2\rho(X,Y)-2\sum_{i=1}^n \<R(X,e_i)Y, e_i-\<X,e_i\>X\> -4\sum_{i=1}^n \<e_i-\<X,e_i\>X, \<Y,e_i\>X\>= 0,
  \end{align*}
and so $\sum_{i=1}^n \<\cR(X,e_i)X, \cR(X,e_i)Y\> =0$, for any two orthogonal vectors $X$ and $Y$ (not necessarily unit). Consider the function $F:\Rn\setminus\{0\} \to \br$ defined by $F(X) = \|X\|^{-4}\Tr (\cR_X^2) = \|X\|^{-4} \sum_{i=1}^n \<\cR(X,e_i)X, \cR(X,e_i)X\>$. Then $(\partial_X F)(X)=0$, as $F$ is $0$-homogeneous in $X$, and $(\partial_Y F)(X)=0$ for $Y \perp X$, by the equation above. It follows that $F$ is a (non-negative) constant. Conversely, if $\Tr (\cR_X^2)= H \|X\|^4$, then the equation $\sum_{i=1}^n \<\cR(X,e_i)X, \cR(X,e_i)Y\> =0$ for $X \perp Y$ follows by polarisation.
\end{proof}

The proof of the Theorem is now concluded by the following purely algebraic fact.
\begin{proposition}\label{p:act}
Let $\cR$ be an algebraic curvature tensor in $(\Rn, \ip), \; n \ge 5$. Suppose that
\begin{enumerate}[{\rm (a)}]
  \item \label{it:act1}
  There exists $H \ge 0$ such that for all $X \in \Rn$ we have
  \begin{equation}\label{eq:RXsq}
    \Tr (\cR_X^2)= H \|X\|^4.
  \end{equation}

  \item \label{it:act2}
  There exists a direct orthogonal decomposition $\Rn=W_1 \oplus W_2$, with $\dim W_i > 0$, such that
    \begin{equation}\label{eq:codazzicR}
        \cR(W_1, W_1) W_2 = \cR(W_2, W_2) W_1 = 0.
    \end{equation}
\end{enumerate}
Then $\cR$ has constant curvature.
\end{proposition}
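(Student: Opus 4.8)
The plan is to use condition \eqref{eq:codazzicR} to put $\cR$ into a block form and then play it off against the quartic identity \eqref{eq:RXsq}. Write $p_i=\dim W_i$. Using only the algebraic symmetries of $\cR$ together with \eqref{eq:codazzicR}, I would first show that $\cR$ is ``block-diagonal'' with respect to $\Lambda^2\Rn=\Lambda^2W_1\oplus(W_1\wedge W_2)\oplus\Lambda^2W_2$: the relation $\cR(W_1,W_1)W_2=0$ already yields $\cR(\Lambda^2W_1,W_1\wedge W_2)=\cR(\Lambda^2W_1,\Lambda^2W_2)=0$ and, via the first Bianchi identity, the vanishing of every component of $\cR$ having exactly one index in $W_2$; symmetrically for $W_2$. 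Hence $\cR$ is completely encoded by a triple $(\mathcal A,\mathcal C,\Phi)$, where $\mathcal A=\cR|_{W_1}$ and $\mathcal C=\cR|_{W_2}$ are algebraic curvature tensors on $W_1$ and $W_2$, and $\Phi(X,Y;A,B):=\cR(X,A,Y,B)$ (for $X,Y\in W_1$, $A,B\in W_2$) is, again by Bianchi and the vanishing of the cross-blocks, symmetric separately in $X\leftrightarrow Y$ and in $A\leftrightarrow B$, i.e.\ lies in $S^2W_1^*\otimes S^2W_2^*$.

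Next I would compute the Jacobi operators. For a unit $e\in W_1$ the operator $\cR_e$ preserves both $W_1$ and $W_2$, acting as the Jacobi operator $\mathcal A_e$ on $W_1$ and as the symmetric operator $B_e:=\Phi(e,e;\cdot,\cdot)$ on $W_2$; symmetrically $\cR_f=C_f\oplus\mathcal C_f$ for unit $f\in W_2$, with $C_f:=\Phi(\cdot,\cdot;f,f)$. For a mixed unit vector $u=(\cos\theta)e+(\sin\theta)f$ with $e\in W_1$, $f\in W_2$ orthonormal, one gets $\cR_u=\cos^2\!\theta\,\cR_e+\sin^2\!\theta\,\cR_f+\cos\theta\sin\theta\,T$, where $T$ is a symmetric operator exchanging $W_1$ and $W_2$ whose off-diagonal block is governed by the partial contractions of $\Phi$ with $e$ and $f$. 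Substituting into \eqref{eq:RXsq} and matching the homogeneous parts in $(\cos\theta,\sin\theta)$ — the odd ones vanish automatically, since products of block-diagonal and off-diagonal operators are traceless — yields $\Tr\mathcal A_e^2+\Tr B_e^2=H=\Tr\mathcal C_f^2+\Tr C_f^2$ for all unit $e,f$, together with the basic identity
\begin{equation*}
  2\Tr(\cR_e\cR_f)+\Tr T^2=2H
\end{equation*}
for orthonormal $e\in W_1$, $f\in W_2$; polarising the first two identities within $W_1$ and within $W_2$ produces further relations of the same type.

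The heart of the matter, and the step I expect to be the main obstacle, is to deduce from these relations that $\Phi$ is a scalar multiple of $g_{W_1}\otimes g_{W_2}$ — equivalently, that $B_e=c\,\mathrm{Id}_{W_2}$ and $C_f=c\,\mathrm{Id}_{W_1}$ for a single constant $c$. The relations are only bilinear in $\Phi$, so extracting this rigidity will require combining them with the non-negativity of $\Tr\mathcal A_e^2$, $\Tr B_e^2$, $\Tr\mathcal C_f^2$, $\Tr C_f^2$ in an extremal/semidefiniteness argument (regarding $\Phi$ as a symmetric operator on $W_1\otimes W_2$ and inspecting its extreme directions), and it will probably be convenient to treat the case $\min(p_1,p_2)=1$ separately from $p_1,p_2\ge2$; this is presumably where the hypothesis $n\ge5$ is needed.

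Once $\Phi=c\,g_{W_1}\otimes g_{W_2}$ the conclusion follows cleanly. Let $\cR':=\cR-\cR_0$, where $\cR_0$ is the algebraic curvature tensor of constant curvature $c$. Then $\cR'$ has vanishing mixed block and vanishing one-index-in-$W_2$ block, so $\cR'=\mathcal A'\oplus\mathcal C'$ is the orthogonal direct sum of algebraic curvature tensors $\mathcal A'$ on $W_1$ and $\mathcal C'$ on $W_2$; in particular its polarised Jacobi operator between $W_1$ and $W_2$ vanishes, so $\cR'_u=\cos^2\!\theta\,\mathcal A'_e\oplus\sin^2\!\theta\,\mathcal C'_f$ for $u$ as above. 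Writing $\cR_X=\cR'_X+c(\mathrm{Id}-X\otimes X^\flat)$ for unit $X$ and substituting into \eqref{eq:RXsq}, the conditions at $X=e\in W_1$ and at $X=f\in W_2$ become $\Tr\mathcal A'^2_e+2c\,\rho_{\mathcal A'}(e,e)=\Tr\mathcal C'^2_f+2c\,\rho_{\mathcal C'}(f,f)=H-(n-1)c^2$; plugging these back into the condition at the mixed vector $u$ and using $\cos^2\!\theta+\sin^2\!\theta=1$, all terms cancel except $-\cos^2\!\theta\sin^2\!\theta\,(\Tr\mathcal A'^2_e+\Tr\mathcal C'^2_f)=0$. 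Since both summands are non-negative, $\mathcal A'_e=0$ and $\mathcal C'_f=0$ for all $e,f$, hence $\mathcal A'=\mathcal C'=0$, so $\cR=\cR_0$ has constant curvature. (The case $\Phi=0$, i.e.\ $c=0$, is the same computation and forces $\cR=0$.)
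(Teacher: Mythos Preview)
Your block decomposition and the computation of $\cR_u$ for mixed $u$ are correct, and the final step --- once $\Phi = c\, g_{W_1}\otimes g_{W_2}$ is granted, deducing $\mathcal A'=\mathcal C'=0$ from the $-\cos^2\theta\sin^2\theta\,(\Tr\mathcal A'^2_e+\Tr\mathcal C'^2_f)=0$ cancellation --- is clean and correct. The gap is exactly where you flag it: you have not proved that $\Phi$ is scalar, and the plan (``regard $\Phi$ as a symmetric operator on $W_1\otimes W_2$ and inspect its extreme directions'') is not enough as it stands. The three relations you extract from a single pair $(e,f)$, namely $\Tr\mathcal A_e^2+\Tr B_e^2=H$, $\Tr\mathcal C_f^2+\Tr C_f^2=H$, and $2\Tr(\cR_e\cR_f)+\Tr T^2=2H$, do not by themselves force $B_e$ (or $C_f$) to be scalar; one must polarise further, bringing in two independent directions in \emph{each} $W_i$, and then find a linear combination of the resulting quadratic identities that is positive semidefinite in the components of $\cR$. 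That is where the real work lies, and where $n\ge5$ actually enters.

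The paper's proof takes a rather different route to manufacture that semidefinite form. It complexifies \eqref{eq:RXsq}, restricts to vectors $X\in\Cn$ with at most two nonzero coordinates in each $W_i$, and symmetrises over all coordinate permutations, so that only $\sigma_1,\sigma_2$ of the two coordinate blocks survive. This rewrites \eqref{eq:RXsq} as a linear relation among ten explicit quadratic forms $P_h,Q_h,S_h$ in the components of $\cR$, whose coefficients can be prescribed arbitrarily (Lemma~\ref{l:exists}). One then chooses the coefficients so that the right-hand side vanishes while the left-hand side becomes a sum of squares; the existence of such a choice is precisely the system of inequalities \eqref{eq:munu} in two auxiliary parameters $\xi,\eta>0$, and it is here that $d_1+d_2\ge5$ is used. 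The case $d_1=1$ is treated separately with a single complex test vector. Your two-stage strategy (first pin down $\Phi$, then kill $\mathcal A',\mathcal C'$) is conceptually appealing, but to execute the first stage you will in practice be driven to a positivity argument of the same kind, and that argument is the substance of the proof.
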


To see that Proposition~\ref{p:act} indeed implies the Theorem we note that for the algebraic curvature tensor $\cR$ defined by \eqref{eq:defcR}, condition \eqref{it:act2} is satisfied by \eqref{eq:codazzi} if we take $W_1=E_1, \; W_2=\oplus_{\la=2}^N E_\la$, and condition \eqref{it:act1}, by Lemma~\ref{l:shift}. Then $\cR$ has constant curvature, which by \eqref{eq:defcR} implies that $R$ has constant curvature at $p$ contradicting the fact that the Ricci tensor $\rho$ at $p$ has $N \ge 2$ pairwise distinct eigenvalues.

\begin{proof}[Proof of Proposition~\ref{p:act}]
We first give a brief, informal sketch of the proof. Complexifying everything we get that condition \eqref{eq:RXsq} is valid for any $X \in \Cn$. Taking a particular $X \in \Cn$ we can symmetrise \eqref{eq:RXsq} by the product of two symmetric groups: the permutations of the coordinates of $X$ in $W_1$ and in $W_2$ respectively. Then the left-hand side of \eqref{eq:RXsq} becomes a quadratic form in the components of $\cR$ with coefficients depending on elementary symmetric functions $\sigma_h$ of the corresponding sets of coordinates (note that the left-hand side of \eqref{eq:RXsq} has degree four in the coordinates of $X$, so we will have only $\sigma_h$ with $h \le 4$). We then choose a set of vectors $X^\alpha \in \Cn$ such that $\sum_\alpha \|X^\alpha\|^4=0$ and take the sum of the above equations by $\alpha$. Then the right-hand side of the resulting equation becomes zero, and the left-hand side, if we choose our vectors $X^\alpha \in \Cn$ ``in the correct way", becomes not only real, but a \emph{positive semidefinite} quadratic form in the components of $\cR$. This will give us a set of linear equations in the components of $\cR$ which will imply that $\cR$ has constant curvature.

\smallskip

Beginning in earnest, we denote $d_1 = \dim W_1 \ge 1, \; d_2=\dim W_2 \ge 1$, assume that $d_1 \le d_2$ (recall that $d_1+d_2 = n \ge 5$), and adopt the following index convention: $1 \le i,j,k,l \le d_1; \; d_1+1 \le a,b,c,d \le d_1+d_2$. In all summations below, the indices run over the corresponding ranges.

Choose an orthonormal basis $\{e_i,e_a\}$ for $(\Rn, \ip)$ such that $W_1=\Span_i(e_i), \; W_2=\Span_a(e_a)$. Then by \eqref{eq:codazzicR} we have
\begin{equation}\label{eq:cR1}
    \cR_{ijka}=\cR_{ijab}=\cR_{iabc}=0, \quad   \cR_{iajb}=\cR_{ibja},
\end{equation}
for all the values of the subscripts, where the latter equations follows from the first Bianchi identity.

The algebraic equation \eqref{eq:RXsq} holds in the complexification $\Cn$ of $\Rn$ (with the inner product extended from that on $\Rn$ by complex linearity). Let $X=\sum_i x_i e_i + \sum_a y_a e_a$. Then using \eqref{eq:cR1} we get
\begin{align*}
    (\cR_X)_{ij} &= \sum_{kl} x_k x_l \cR_{kilj} + \sum_{cd} y_c y_d \cR_{cidj}, \; \;
    (\cR_X)_{ab} = \sum_{kl} x_k x_l \cR_{kalb} + \sum_{cd} y_c y_d \cR_{cadb}, \; \;
    (\cR_X)_{ia} = \sum_{kc} x_k y_c \cR_{cika},
\end{align*}
and so
\begin{equation}\label{eq:RXsqexpand}
\begin{split}
    \Tr (\cR_X^2) &= \sum_i x_i^4 p_{1|i} + \sum_{i\ne j} x_i^3 x_j p_{2|ij} + \sum_{i\ne j} x_i^2 x_j^2 p_{3|ij} + {\sum_{ijk}}' x_i^2 x_j x_k p_{4|ijk} + {\sum_{ijkl}}' x_i x_j x_k x_l p_{5|ijkl} \\
    & + \sum_a y_a^4 q_{1|a} + \sum_{a\ne b} y_a^3 y_b q_{2|ab} + \sum_{a\ne b} y_a^2 y_b^2 q_{3|ab} + {\sum_{abc}}' y_a^2 y_b y_c q_{4|abc} + {\sum_{abcd}}' y_a y_b y_c y_d q_{5|abcd} \\
    & + 2 \sum_{ia} x_i^2 y_a^2 s_{1|ia} + 2 \sum_{a\ne b;i} x_i y_a y_b s_{2|iab} + 2 \sum_{i\ne j;a} x_i x_j y_a s_{3|ija} + 2 \sum_{a \ne b;i\ne j} x_i x_j y_a y_b s_{4|ijab},
\end{split}
\end{equation}
where here and below we denote $\sum'$ the summation by all the pairwise nonequal values of the subscripts in the respective ranges, and where $p_{1|i}, p_{2|ij}, \dots, s_{4|ijab}$ are quadratic forms in the components of $\cR$. In particular, 
\begin{align*}
    p_{1|i} &= \sum_{kl} \cR_{ikil}^2 + \sum_{ab} \cR_{iaib}^2,\\
    p_{3|ij} &= \sum_{kl} \cR_{ikil}\cR_{jkjl} + \sum_{kl} \cR_{ikjl}^2 + \sum_{kl} \cR_{ikjl}\cR_{jkil} + \sum_{ab} \cR_{iaib}\cR_{jajb} + 2 \sum_{ab} \cR_{iajb}^2,\\
    q_{1|a} &= \sum_{cd} \cR_{acad}^2 + \sum_{kl} \cR_{akal}^2,\\
    q_{3|ab} &= \sum_{cd} \cR_{acad}\cR_{bcbd} + \sum_{cd} \cR_{acbd}^2 + \sum_{cd} \cR_{acbd}\cR_{bcad} + \sum_{kl} \cR_{akal}\cR_{bkbl} + 2 \sum_{kl} \cR_{akbl}^2,\\
    s_{1|ia} & = \sum_{kl} \cR_{ikil} \cR_{akal} + \sum_{cd} \cR_{icid} \cR_{acad} + \sum_{kc} \cR_{akic}^2,
\end{align*}
where in the right-most terms of $p_{3|ij}, q_{3|ab}$ and $s_{1|ia}$ we used the last equation of \eqref{eq:cR1}. We now take the sum of the expressions for $\Tr (\cR_X^2)$ given by \eqref{eq:RXsqexpand} by all the permutations of the $x_i$ and of the $y_a$. The resulting expression $\mS(X)$ depends on the elementary symmetric functions $\sigma_h(x), \sigma_h(y), \; h=1,2,3,4$, of the variables $x_i$ and $y_a$ respectively (rather than on these variables as such), where we denote $\sigma_1(x)=\sum_i x_i, \; \sigma_2(x)=\sum_{i \ne j} x_i x_j, \; \sigma_3(x)={\sum}'_{ijk} x_i x_j x_k, \; \sigma_4(x)={\sum}'_{ijkl} x_i x_j x_k x_l$, and similarly, for $\sigma_h(y)$.

Let $\mathcal{Z} \subset \Cn$ be the set of vectors $X=\sum_i x_i e_i + \sum_a y_a e_a$ such that no more than two of the $x_i$ and no more than two of the $y_a$ are nonzero. Assuming that the vector $X$ is chosen in $\mathcal{Z}$ we get $\sigma_3(x)=\sigma_4(x)=\sigma_3(y)=\sigma_4(y)=0$.

Performing the summation by all the permutations of the $x_i$ and of the $y_a$ in every term on the right-hand side of \eqref{eq:RXsqexpand} and then expressing the coefficients in terms of $\sigma_h(x), \sigma_h(y)$ we get
\begin{equation} \label{eq:Sx}
    \mS(X) = \sum_{h=1}^3 A_h(x) P_h + \sum_{h=1}^3 B_h(y) Q_h + \sum_{h=1}^4 C_h(x,y) S_h,
\end{equation}
with
\begin{equation}\label{eq:AhZ}
\begin{split}
    A_1(x) &=(d_1-1)!d_2! (\sigma_1^4(x)-4 \sigma_1^2(x) \sigma_2(x) + 2 \sigma_2^2(x)), \\
    A_2(x) &=(d_1-2)!d_2! (\sigma_1^2(x) \sigma_2(x) - 2 \sigma_2^2(x)), \\
    A_3(x) &=(d_1-2)!d_2! (2 \sigma_2^2(x)), \\
    C_1(x,y) &=(d_1-1)!(d_2-1)! 2(\sigma_1^2(x) - 2\sigma_2(x))(\sigma_1^2(y) - 2\sigma_2(y)), \\
    C_2(x,y) &=(d_1-1)!(d_2-2)! 4(\sigma_1^2(x) - 2\sigma_2(x))\sigma_2(y), \\
    C_3(x,y) &=(d_1-2)!(d_2-1)! 4\sigma_2(x) (\sigma_1^2(y) - 2\sigma_2(y)), \\
    C_4(x,y) &=(d_1-2)!(d_2-2)! 4 \sigma_2(x)\sigma_2(y),
\end{split}
\end{equation}
with the expressions for $B_h(y)$ obtained from those for $A_h(x)$ by interchanging $d_1$ and $d_2$ and replacing $x$ by $y$, and where we set $\sigma_h(x)=0$ if $h > d_1$ (respectively, $\sigma_h(y)=0$ if $h > d_2$) and $m!=0$ if $m < 0$.


The terms $P_h, Q_h$ and $S_h$ are quadratic forms in the components of $\cR$, where in particular,
\begin{equation}\label{eq:PhQhS1}
\begin{split}
    P_1 & = \sum_i p_{1|i} = \sum_{ikl} \cR_{ikil}^2 + \sum_{iab} \cR_{iaib}^2,\\
    P_3 & = \sum_{i \ne j} p_{3|ij} = \sum_{i \ne j} \Big(\sum_{kl} \cR_{ikil}\cR_{jkjl} + \sum_{kl} \cR_{ikjl}^2 + \sum_{kl} \cR_{ikjl}\cR_{jkil} + \sum_{ab} \cR_{iaib}\cR_{jajb} + 2 \sum_{ab} \cR_{iajb}^2\Big),\\
    Q_1 & =\sum_a q_{1|a} = \sum_{acd} \cR_{acad}^2 + \sum_{kla} \cR_{akal}^2,\\
    Q_3 & = \sum_{a\ne b} q_{3|ab} = \sum_{a\ne b} \Big(\sum_{cd} \cR_{acad}\cR_{bcbd} + \sum_{cd} \cR_{acbd}^2 + \sum_{cd} \cR_{acbd}\cR_{bcad} + \sum_{kl} \cR_{akal}\cR_{bkbl} + 2 \sum_{kl} \cR_{akbl}^2\Big),\\
    S_1 & = \sum_{ia} s_{1|ia} = \sum_{ikla} \cR_{ikil} \cR_{akal} + \sum_{iacd} \cR_{icid} \cR_{acad} + \sum_{ikac} \cR_{akic}^2.
\end{split}
\end{equation}

From \eqref{eq:RXsq} by \eqref{eq:AhZ} we get
\begin{equation}\label{eq:Sxrhs}
\begin{split}
    \mS(X) &= H d_1! d_2! \|X\|^4 = H d_1! d_2! (\sigma_1^2(x)- 2 \sigma_2(x)+\sigma_1^2(y)- 2 \sigma_2(y))^2 \\
    &= H (d_1 A_1(x)+ d_1(d_1-1) A_3(x) + d_2 B_1(y)+ d_2(d_2-1) B_3(y) + d_1d_2 C_1(x,y)).
\end{split}
\end{equation}

We consider two cases.

\smallskip

\underline{Case 1. $d_1=1$.} Take $X=e_1+\mathrm{i}e_2$, so that $\sigma_1(x)=1, \sigma_1(y)=\mathrm{i}, \sigma_2(x)=\sigma_2(y)=0$. Then from \eqref{eq:AhZ} we get $A_1(x) = d_2!, \, B_1(y) = (d_2-1)!, \, C_1(x,y) = - 2(d_2-1)!$, and all the other $A_h(x), B_h(y), C_h(x,y)$ are zeros. It follows from \eqref{eq:Sxrhs} that $\mS(X)=0$, and from \eqref{eq:Sx}, that $\mS(X) = (d_2-1)!(d_2 P_1 + Q_1 - 2 S_1)$, and so $d_2 P_1 + Q_1 - 2 S_1=0$. But from \eqref{eq:PhQhS1} 
\begin{align*}
    d_2 P_1 + Q_1 - 2 S_1 &= d_2 \sum_{ab} \cR_{1a1b}^2 + \sum_{acd} \cR_{acad}^2 + \sum_{a} \cR_{a1a1}^2-2\sum_{acd} \cR_{1c1d} \cR_{acad} - 2 \sum_{ac} \cR_{1a1c}^2\\
    &=\sum_{a \ne c,d}(\cR_{1c1d}-\cR_{acad})^2.
\end{align*}
Then $\cR_{1c1d}=\cR_{acad}$, for all $a, c, d$ such that $a \ne c,d$. As the choice of the orthonormal basis $\{e_a\}$ for $W_2$ is arbitrary and as $\cR_{1acd}=0$ by \eqref{eq:cR1}, the claim easily follows.

\smallskip

\underline{Case 2. $d_1 \ge 2$.} The proof is similar to that in Case 1, but we need more than one vector $X$.

Let $\cX=\{X^1, \dots, X^m\}$ be a set of vectors $X^\alpha=\sum_i x_i^\alpha e_i + \sum_a y_a^\alpha e_a \in \mathcal{Z}$. Denote $\mS(\cX) = \sum_{\alpha=1}^m \mS(X^\alpha)$ and $A_h(\cX) = \sum_{\alpha=1}^m A_h(x^\alpha), \; B_h(\cX) = \sum_{\alpha=1}^m B_h(y^\alpha), \; C_h(\cX) = \sum_{\alpha=1}^m C_h(x^\alpha, y^\alpha)$. Then by \eqref{eq:Sx}
\begin{equation} \label{eq:SX}
    \mS(\cX) = \sum_{h=1}^3 A_h(\cX) P_h + \sum_{h=1}^3 B_h(\cX) Q_h + \sum_{h=1}^4 C_h(\cX) S_h,
\end{equation}
and by \eqref{eq:Sxrhs},
\begin{equation}\label{eq:SXrhs}
    \mS(\cX) = H (d_1 A_1(\cX)+ d_1(d_1-1) A_3(\cX) + d_2 B_1(\cX)+ d_2(d_2-1) B_3(\cX) + d_1d_2 C_1(\cX)).
\end{equation}

We require the following fact.
\begin{lemma}\label{l:exists}
For arbitrary complex numbers $a_h, b_h, \; h=1,2,3$, and $c_h, \; h=1,2,3,4$, there exists a set $\cX=\{X^1, \dots, X^m\} \subset \mathcal{Z}$ such that
$A_h(\cX) = a_h, \, B_h(\cX) = b_h, \; h=1,2,3$, and $C_h(\cX) = c_h, \; h=1,2,3,4$.
\end{lemma}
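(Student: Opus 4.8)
The plan is to reduce the problem to a counting-of-degrees-of-freedom argument and then exhibit enough explicit vectors in $\mathcal{Z}$ to hit an arbitrary target. First I would observe that the map $X \mapsto (A_1(x),A_2(x),A_3(x),B_1(y),B_2(y),B_3(y),C_1(x,y),\dots,C_4(x,y)) \in \mathbb{C}^{10}$ is, up to the fixed factorial constants, a polynomial in the four quantities $u:=\sigma_1^2(x)$, $v:=\sigma_2(x)$, $s:=\sigma_1^2(y)$, $t:=\sigma_2(y)$ attached to the single vector $X$; explicitly, the $A_h$ depend only on $(u,v)$, the $B_h$ only on $(s,t)$, and the $C_h$ on the products $(u-2v)(s-2t)$, $(u-2v)t$, $v(s-2t)$, $vt$. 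Since $\mathcal{Z}$ imposes no constraint beyond $\sigma_3=\sigma_4=0$ (automatic when at most two coordinates in each block are nonzero), I can realise essentially any value of $(u,v,s,t)$: taking $X = (x_1 e_1 + x_2 e_2) + (y_1 e_1' + y_2 e_2')$ with two free parameters in each block gives $\sigma_1(x)=x_1+x_2$, $\sigma_2(x)=2x_1x_2$, so $(u,v)$ ranges over all of $\mathbb{C}^2$ (solve $x_1+x_2$, $x_1x_2$ from a quadratic), and likewise $(s,t)$. However, a single $X$ only produces the \emph{nonlinear} locus cut out by the relations $C_1 = \text{const}\cdot(u-2v)(s-2t)$ etc., so one vector is not enough; this is why we sum over a set $\cX$.

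The key point is that $A_h,B_h,C_h$ are \emph{additive} over $\cX$, so the set of attainable tuples $(A_h(\cX),B_h(\cX),C_h(\cX))$ is the additive semigroup — in fact, over $\mathbb{C}$, the linear span, since we may repeat vectors and scale — generated by the tuples coming from individual vectors. Thus it suffices to show that the single-vector tuples span $\mathbb{C}^{10}$. I would verify this by choosing a handful of convenient vectors: for instance, vectors supported entirely in $W_1$ (so all $B_h=C_h=0$) with varied $(u,v)$ show that the $(A_1,A_2,A_3)$-coordinates alone span $\mathbb{C}^3$ — indeed the matrix of $(u^2-4uv+2v^2,\ uv-2v^2,\ 2v^2)$ evaluated at, say, $(u,v)=(1,0),(0,1),(1,1)$ (realisable since $d_1\ge 2$ in Case~2, and the $d_1=1$ case of the lemma is only invoked with the corresponding constraints) is invertible. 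Symmetrically, vectors in $W_2$ span the $(B_1,B_2,B_3)$-coordinates. Finally, vectors with support in both blocks contribute to the $C_h$; after subtracting off suitable combinations of the pure-$W_1$ and pure-$W_2$ generators (to zero out the $A$- and $B$-parts), what remains in the $C$-coordinates is $\bigl((u-2v)(s-2t),\ (u-2v)t,\ v(s-2t),\ vt\bigr)$ up to constants, and choosing $(u-2v,v)\in\{(1,0),(0,1)\}$ and $(s-2t,t)\in\{(1,0),(0,1)\}$ gives the four standard basis vectors of $\mathbb{C}^4$. Hence the ten single-vector tuples so obtained are linearly independent, their span is all of $\mathbb{C}^{10}$, and any target $(a_h,b_h,c_h)$ is a $\mathbb{C}$-linear combination of them; absorbing the coefficients into the vectors (scaling $X$ by $\lambda$ scales $u,v,s,t$ by $\lambda^2$, hence scales every $A_h,B_h,C_h$ by $\lambda^2$, so arbitrary complex coefficients are achieved by scaling, and negative/complex combinations by using two scaled copies) produces the desired finite set $\cX\subset\mathcal{Z}$.

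The main obstacle I anticipate is bookkeeping rather than anything conceptual: one must be careful that the factorial prefactors in \eqref{eq:AhZ} do not vanish for the relevant values of $d_1,d_2$ (which holds since in Case~2 we have $d_1\ge 2$, so $(d_1-2)!$ is a genuine positive integer, and $d_2\ge d_1\ge 2$), and one must double-check that the chosen values of $(u,v)$ and $(s,t)$ are actually realised by vectors with at most two nonzero coordinates per block — this needs $d_1,d_2\ge 2$ to have two coordinates available, which is exactly the standing assumption of Case~2 where the lemma is used. A minor subtlety is that scaling $X$ multiplies $\|X^\alpha\|^4=(u-2v+s-2t)^2$ and the $A_h$ etc. all by $\lambda^2$ uniformly, so the linear-algebra reduction is clean; no homogeneity mismatch arises. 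Once the $10\times 10$ invertibility is checked on the explicit generators, the lemma follows immediately.
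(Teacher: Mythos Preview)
Your argument is correct and follows essentially the same route as the paper's: both observe that the attainable tuples form a linear subspace of $\mathbb{C}^{10}$ and then show this subspace is the whole space by evaluating at finitely many well-chosen values of $(\sigma_1(x),\sigma_2(x),\sigma_1(y),\sigma_2(y))$; the paper argues by contradiction (no nontrivial annihilating functional), whereas you argue directly by exhibiting ten spanning single-vector tuples, which is the dual formulation of the same computation. One minor slip: scaling $X$ by $\lambda$ multiplies $u,v,s,t$ by $\lambda^2$ but, since the $A_h,B_h,C_h$ are quadratic in $(u,v,s,t)$, multiplies them by $\lambda^4$ rather than $\lambda^2$---this is harmless because $\lambda\mapsto\lambda^4$ is still surjective on $\mathbb{C}$.
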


\begin{proof}
The set of $10$-dimensional vectors $(A_1(\cX), A_2(\cX), A_3(\cX), B_1(\cX), B_2(\cX), B_3(\cX), C_1(\cX), C_2(\cX), C_3(\cX),$ $ C_4(\cX))$ is a linear subspace of $\mathbb{C}^{10}$. Suppose that subspace is proper. Then there exists a nontrivial linear combination $\sum_{h=1}^3 \mu_h A_h(\cX) + \sum_{h=1}^3 \nu_h B_h(\cX) + \sum_{h=1}^4 \la_h C_h(\cX)$ which vanishes for any $\cX$, and so $\sum_{h=1}^3 \mu_h A_h(x) + \sum_{h=1}^3 \nu_h B_h(y) + \sum_{h=1}^4 \la_h C_h(x,y)=0$ for any $X \in \mathcal{Z}$. But from \eqref{eq:AhZ}, taking $y=0, \sigma_1(x)=1, \sigma_2(x)=0$ we get $\mu_1=0$, then taking $y=0, \sigma_1(x)= 1,  \sigma_2(x) = \sqrt{2}$ we get $\mu_3=0$, and then taking $y=0, \sigma_1(x) = 0, \sigma_2(x) =1$ we get $\mu_2=0$. Similarly $\nu_1=\nu_2=\nu_3=0$. Then again from \eqref{eq:AhZ} taking $\sigma_1(x)= \sigma_1(y)= 1, \sigma_2(x) = \sigma_2(y) = 0$ we get $\la_1=0$, then taking $\sigma_1(x)= 1, \sigma_2(x) = 0, \sigma_2(y) = 1$ we get $\la_2=0$, then taking $\sigma_1(y)= 1, \sigma_2(y) = 0$, $\sigma_2(x) = 1$ we get $\la_3=0$, and then taking $\sigma_2(x) = \sigma_2(y) = 1$ we get $\la_4=0$.
\end{proof}

Take positive real numbers $\xi$ and $\eta$ satisfying the following inequalities:
\begin{equation}\label{eq:munu}
\begin{gathered}
    \mu:=(d_2-1)\eta+(d_2-d_1-1)\xi > 0, \qquad \nu:=(d_1-1)\xi+(d_1-d_2-1)\eta > 0, \\
    (d_1-d_2-2) \eta + d_1 \xi \ge 0, \qquad (d_2-d_1-2) \xi + d_2 \eta \ge 0
\end{gathered}
\end{equation}
(recall that $2 \le d_1 \le d_2$ and $d_1+d_2 > 4$). If $d_2=d_1$ we can take $\xi=\eta=1$; if $d_2 \ge d_1+2$ we can take $\eta=1$ and $\xi > \max((d_1-1)^{-1}(d_2-d_1+1), d_1^{-1}(d_2-d_1+2))$; if $d_2 \ge d_1+1$ we can take $\eta=1$ and $\xi \in (\max(2(d_1-1)^{-1}, 3 d_1^{-1}), d_1+1)$.

By Lemma~\ref{l:exists} we can choose $\cX$ in such a way that
\begin{gather*}
A_2(\cX) = B_2(\cX) = C_2(\cX) = C_3(\cX) = C_4(\cX) = 0, \quad A_3(\cX) = \xi, \quad  B_3(\cX) = \eta, \\
C_1(\cX) = -2 (\xi + \eta), \quad A_1(\cX) = (d_2-d_1+1) \xi + d_2 \eta, \quad B_1(\cX) = (d_1-d_2+1) \eta + d_1 \xi.
\end{gather*}
Then by \eqref{eq:SXrhs} $\mS(\cX) = 0$, and so from \eqref{eq:SX} and \eqref{eq:PhQhS1} we get
\begin{equation}\label{eq:finalqf}
\begin{split}
0 & = A_1(\cX) P_1 + A_3(\cX) P_3 + B_1(\cX) Q_1 + B_3(\cX) Q_3 + C_1(\cX) S_1 \\
& = ((d_2-d_1+1) \xi + d_2 \eta) P_1 + \xi P_3 + ((d_1-d_2+1) \eta + d_1 \xi) Q_1 + \eta Q_3 - 2(\xi + \eta) S_1 \\
& = ((d_2-d_1+1) \xi + d_2 \eta) \Big(\sum_{ikl} \cR_{ikil}^2 + \sum_{iab} \cR_{iaib}^2\Big) \\
& \quad + \xi \sum_{i \ne j} \Big(\sum_{kl} \cR_{ikil}\cR_{jkjl} + \sum_{kl} \cR_{ikjl}^2 + \sum_{kl} \cR_{ikjl}\cR_{jkil} + \sum_{ab} \cR_{iaib}\cR_{jajb} + 2 \sum_{ab} \cR_{iajb}^2\Big) \\
& \quad + ((d_1-d_2+1) \eta + d_1 \xi) \Big(\sum_{acd} \cR_{acad}^2 + \sum_{kla} \cR_{akal}^2\Big) \\
& \quad + \eta \sum_{a\ne b} \Big(\sum_{cd} \cR_{acad}\cR_{bcbd} + \sum_{cd} \cR_{acbd}^2 + \sum_{cd} \cR_{acbd}\cR_{bcad} + \sum_{kl} \cR_{akal}\cR_{bkbl} + 2 \sum_{kl} \cR_{akbl}^2\Big) \\
& \quad - 2(\xi + \eta) \Big(\sum_{ikla} \cR_{ikil} \cR_{akal} + \sum_{iacd} \cR_{icid} \cR_{acad} + \sum_{ikac} \cR_{akic}^2\Big)
\end{split}
\end{equation}
We now show that the quadratic form $\cQ$ on the right-hand side of \eqref{eq:finalqf} is positive semidefinite in the components of $\cR$. The form $\cQ$ does not contain the components $\cR_{iajb}$ with $i \ne j, \, a \ne b$, as the corresponding terms cancel out. Furthermore, we have $\cQ=\cQ_1+\cQ_2+\cQ_3+\cQ_4$, where $\cQ_1$ only involves the components $\cR_{ikjl}$ with $i, j, k, l$ pairwise non-equal; $\cQ_2$, only the components $\cR_{abcd}$ with $a, b, c, d$ pairwise non-equal; $\cQ_3$, only the components $\cR_{ijij}, \cR_{abab}, \cR_{iaia}$; and $\cQ_4$, only the components $\cR_{ijik}, \cR_{ibic}, \cR_{ajak}, \cR_{abac}$  with $j \ne k$ and $b \ne c$.

We have
\begin{equation*} 
    \cQ_1 = {\frac12 \xi \sum_{ijkl}}'(\cR_{ikjl} + \cR_{jkil})^2, \qquad  \cQ_2 = {\frac12 \eta \sum_{acbd}}'(\cR_{acbd} + \cR_{adbc})^2.
\end{equation*}
To simplify the form $\cQ_3$ we denote $U_k = \sum_i \cR_{ikik}, V_k = \sum_a \cR_{akak}, U_a = \sum_c \cR_{caca}, V_a = \sum_i \cR_{iaia}$. Then
\begin{align*}
\cQ_3 &=
((d_2-d_1+1) \xi + d_2 \eta) \sum_{ik} \cR_{ikik}^2 + \xi \sum_{ijk} \cR_{ikik}\cR_{jkjk} + ((d_1-d_2+1) \eta + d_1 \xi) \sum_{ac} \cR_{acac}^2 + \eta \sum_{abc} \cR_{acac}\cR_{bcbc} \\
& \quad + ((d_2-2) \xi + (d_1-2) \eta) \sum_{ia} \cR_{iaia}^2  + \xi \sum_{ija} \cR_{iaia}\cR_{jaja} + \eta \sum_{abk} \cR_{akak}\cR_{bkbk} \\
& \quad - 2(\xi + \eta) \Big(\sum_{ika} \cR_{ikik} \cR_{akak} + \sum_{iac} \cR_{icic} \cR_{acac}\Big) \\
& = ((d_2-d_1+1) \xi + d_2 \eta) \sum_{ik} \cR_{ikik}^2 + \xi \sum_{k} U_{k}^2 + ((d_1-d_2+1) \eta + d_1 \xi) \sum_{ac} \cR_{acac}^2 + \eta \sum_{c} U_{c}^2 \\
& \quad + ((d_2-2) \xi + (d_1-2) \eta) \sum_{ia} \cR_{iaia}^2  + \xi \sum_{a} V_{a}^2 + \eta \sum_{k} V_{k}^2 - 2(\xi + \eta) \Big(\sum_{k} U_{k} V_{k} + \sum_{c} U_{c} V_{c}\Big).
\end{align*}
Using the fact that $\sum_{ik} \cR_{ikik}^2= \frac{1}{2(d_1-1)} \sum_{kij}(\cR_{ikik}-\cR_{jkjk})^2+ \frac{1}{d_1-1} \sum_k U_k^2, \; \sum_{ac} \cR_{acac}^2= \frac{1}{2(d_2-1)} \sum_{acd}(\cR_{acac}-\cR_{adad})^2+ \frac{1}{d_2-1} \sum_a U_a^2$, and $((d_2-2) \xi + (d_1-2) \eta) \sum_{ia} \cR_{iaia}^2 = (\mu+\nu) \sum_{ia} \cR_{iaia}^2 = \mu (\frac{1}{2d_1} \sum_{aij}(\cR_{iaia}-\cR_{jaja})^2+ \frac{1}{d_1} \sum_a V_a^2)+ \nu (\frac{1}{2d_2} \sum_{abi}(\cR_{iaia}-\cR_{ibib})^2+ \frac{1}{d_2} \sum_i V_i^2)$, where $\mu$ and $\nu$ are given by \eqref{eq:munu}, we obtain
\begin{equation}\label{eq:Q3}
\begin{split}
\cQ_3 &= \frac{(d_2-d_1+1) \xi + d_2 \eta}{2(d_1-1)} \sum_{kij}(\cR_{ikik}-\cR_{jkjk})^2
+ \frac{(d_1-d_2+1) \eta + d_1 \xi}{2(d_2-1)} \sum_{acd}(\cR_{acac}-\cR_{adad})^2 \\
& \quad + \frac{\mu}{2d_1} \sum_{aij}(\cR_{iaia}-\cR_{jaja})^2
+ \frac{\nu}{2d_2} \sum_{abi}(\cR_{iaia}-\cR_{ibib})^2\\
& \quad + (\xi + \eta)d_2(d_1-1)\sum_k \Big(\frac{1}{d_1-1}U_k - \frac{1}{d_2} V_{k}\Big)^2 + (\xi + \eta)d_1(d_2-1) \sum_a \Big(\frac{1}{d_2-1}U_a-\frac{1}{d_1}V_a\Big)^2.
\end{split}
\end{equation}
We now consider the quadratic form $\cQ_4$. Collecting the terms we obtain
\begin{align*}
\cQ_4 &= \sum_{k \ne l} \Big[((d_2-d_1+4) \xi + d_2 \eta) \sum_{i} \cR_{ikil}^2 + ((d_1-d_2-2) \eta + d_1 \xi) \sum_{a} \cR_{akal}^2  \\
& \qquad \qquad + \xi \sum_{ij} \cR_{ikil}\cR_{jkjl} + \eta \sum_{ab} \cR_{akal}\cR_{bkbl} - 2(\xi + \eta) \sum_{ia} \cR_{ikil} \cR_{akal}\Big] \\
& + \sum_{c \ne d} \Big[ ((d_1-d_2+4) \eta + d_1 \xi) \sum_a \cR_{acad}^2  + ((d_2-d_1-2) \xi + d_2 \eta) \sum_{i} \cR_{icid}^2  \\
& \qquad \qquad + \eta \sum_{ab} \cR_{acad}\cR_{bcbd} + \xi \sum_{ij} \cR_{icid}\cR_{jcjd} - 2(\xi + \eta) \sum_{c \ne d;ia} \cR_{icid} \cR_{acad} \Big].
\end{align*}
The form $\cQ_4$ is positive semidefinite if the expressions in the square brackets are non-negative. For the first one, it suffices to show that the quadratic form $f=((d_2-d_1+4) \xi + d_2 \eta) \sum_{i} u_{i}^2 + ((d_1-d_2-2) \eta + d_1 \xi) \sum_{a} v_{a}^2 + \xi \sum_{ij} u_{i} u_{j} + \eta \sum_{ab} v_a v_b - 2(\xi + \eta) \sum_{ia} u_i v_a$ in the variables $u_1, \dots, u_{d_1-2}, v_1, \dots, v_{d_2}$ is positive semidefinite. Note that $\xi, \eta > 0$ and that $(d_2-d_1+4) \xi + d_2 \eta, (d_1-d_2-2) \eta + d_1 \xi \ge 0$ by \eqref{eq:munu}. If $d_1=2$, there is no $u_i$'s and $f \ge 0$ trivially. If $d_1 > 2$, then by an orthogonal change of variables $\{u_i\} \mapsto \{u_i'\}, \; \{v_a\} \mapsto \{v_a'\}$ such that $u_1'=(d_1-2)^{-1/2} \sum_{i} u_{i}, \; v_1'= d_2^{-1/2} \sum_{a} v_{a}$, we get $f=((d_2+2) \xi + d_2 \eta) {u_1'}^2 + ((d_1-2) \eta + d_1 \xi) {v_{1}'}^2 - 2(\xi + \eta) \sqrt{d_2(d_1-2)} u_1' v_1' + ((d_2-d_1+4) \xi + d_2 \eta) \sum_{i>1} {u_{i}'}^2 + ((d_1-d_2-2) \eta + d_1 \xi) \sum_{a>1} {v_{a}'}^2$, and so $f \ge 0$, as $((d_2+2) \xi + d_2 \eta)((d_1-2) \eta + d_1 \xi) -d_2(d_1-2)(\xi + \eta)^2 = 2(d_1+d_2)\xi^2+2(d_1+d_2-2) \xi \eta > 0$. A similar argument for the second square bracket shows that $\cQ_4 \ge 0$.

It now follows from \eqref{eq:finalqf} that $\cQ_1=\cQ_2=\cQ_3=0$, which by \eqref{eq:Q3} implies that $\cR_{ikik}=\cR_{jkjk}, \cR_{acac}=\cR_{adad}$, $\cR_{iaia}=\cR_{jaja}, \cR_{iaia}=\cR_{ibib}$, and $d_2 U_k =(d_1-1) V_{k}, \; d_1 U_a = (d_2-1) V_a$. As the choice of the bases for $W_1, W_2$ is arbitrary, the claim follows.
\end{proof}


%



\begin{thebibliography}{20}

\bibitem{Bes}
A. Besse, \emph{Manifolds all of whose geodesics are closed}, Springer, 1978.

\bibitem{Blair1}
D. E. Blair, \emph{Riemannian geometry of contact and symplectic manifolds},
Progress in Math. \textbf{203}, Birkh\"auser, Boston (2002).

\bibitem{BG}
N. Bla\v zi\'c, P. Gilkey,
\emph{Conformally Osserman manifolds and self-duality in Riemannian geometry},
Differential geometry and its applications, 15~--~18, Matfyzpress, Prague, 2005.

\bibitem{BV1}
E. Boeckx, L. Vanhecke, \emph{Geometry of Riemannian manifolds and their unit tangent sphere bundles},
Publ. Math. Debrecen \textbf{57} (2000), 509~--~533.

\bibitem{BV2}
E. Boeckx, L. Vanhecke, \emph{Harmonic and minimal vector fields in tangent and unit tangent bundles},
Differential Geom. Appl. \textbf{13} (2000), 77~--~93.

\bibitem{BY}
A. A. Borisenko, A. L. Yampol'skii,
\emph{Riemannian geometry of bundles}, Russian Math. Surveys \textbf{46} (1991), 55~-–~106.

\bibitem{CP}
G. Calvaruso, D. Perrone, \emph{$H$-contact unit tangent sphere bundles}, Rocky Mountain J. Math. \textbf{37} (2007), 1435~--~1458.

\bibitem{CGW}
P. Carpenter, A. Gray, T. J. Willmore, \emph{The curvature of Einstein symmetric spaces}, Quart. J. Math. Oxford \textbf{33} (1982), 45~--~64.

\bibitem{CPS2}
S. H. Chun, J. H. Park, K. Sekigawa, \emph{$H$-contact unit tangent sphere bundles of Einstein manifolds}, Quart. J. Math. Oxford \textbf{62} (2011),
59~--~69.

\bibitem{CPS3}
S. H. Chun, H. K. Pak, J. H. Park, K. Sekigawa, \emph{A remark on $H$-contact unit tangent sphere bundles}, J. Korean Math. Soc. \textbf{48} (2011), 329~--~340.

\bibitem{CPS4}
S. H. Chun, J. H. Park, K. Sekigawa, \emph{$H$-contact unit tangent sphere bundles of four-dimensional Riemannian manifolds},
J. Aust. Math. Soc. \textbf{91} (2011), 243~--~256.

\bibitem{DS} 
A. Derdzinski, C.-L. Shen,  \emph{Codazzi tensor fields, curvature and Pontryagin forms}, Proc. London Math. Soc. (3), \textbf{47} (1983), 15~--~26.


\bibitem{N}
Y. Nikolayevsky, \emph{Two theorems on harmonic manifolds}, Comm. Math. Helv. \textbf{80} (2005), 29~--~50.

\bibitem{Pe1}
D. Perrone, \emph{Contact metric manifolds whose characteristic vector field is a harmonic vector field}, Differential Geom. Appl. \textbf{20} (2004), 367~--~378.

\bibitem{SV}
K. Sekigawa and L. Vanhecke, \emph{Volume-preserving geodesic symmetries on four-dimensional $2$-stein spaces}, Kodai Math. J. \textbf{9} (1986), 215~--~224.

\bibitem{Wood}
C. M. Wood, \emph{On the energy of a unit vector field},
Geom. Dedicata, \textbf{64} (1997), 319~--~330.

\end{thebibliography}
\end{document}